\newtheorem{definition}{Definition}[section]
\newtheorem{theorem}[definition]{Theorem}
\newtheorem{remark}[definition]{Remark}
\newtheorem{example}[definition]{Example}
\def\emph#1{{\bfseries\itshape{#1}}}
\def\R{\mathbb{R}}               %Numeros reales
\def\S{\mathbb{S}}               %Numeros reales
\def\lcf{\lbrack\! \lbrack}
\def\rcf{\rbrack\! \rbrack}
\def\so{\mathfrak{so}}
\def\g{\mathfrak{g}}
\newcommand{\ltilde}[2]{\ancho=#1 \anchom=\ancho \divide\anchom by 2% Esto es para la tilde grande
            \anchoa=\ancho \divide\anchoa by 4
        \anchob=\anchom \advance\anchob by \anchoa
      $\kern-5pt \begin{array}[b]{c}
                 \begin{picture}(1,1)(\anchom,0)
         \qbezier(0,2)(\anchoa,5)(\anchom,2)
         \qbezier(\anchom,2)(\anchob,-1)(\ancho,4)
         \qbezier(0,2)(\anchoa,4.5)(\anchom,1.8)
         \qbezier(\anchom,1.8)(\anchob,-1.5)(\ancho,4)
      \end{picture} \\[-4pt]
       \mbox{#2}
       \end{array} \kern-9pt$
       }
\begin{document}

\title[Jacobi Reeb dynamics]{Mechanical Hamiltonian systems with respect to linear Poisson structures and Jacobi-Reeb dynamics}
\author{D. Iglesias Ponte, J. C. Marrero$^*$, E. Padr\'on}

\date{}
\thanks{$^*$ Corresponding author}
\thanks{AMS Mathematics Subject Classification (2020): 53D17, 53D25, 70G45, 70H05}
\thanks{Keywords: mechanical Hamiltonian systems, linear Poisson structures, Jacobi structures, Reeb dynamics.}

\begin{abstract}
\noindent In this paper, we present a relation between Jacobi-Reeb dynamics and the dynamics associated with a mechanical Hamiltonian system with respect to a linear Poisson structure on a vector bundle. For this purpose, we will use the so-called Jacobi bundle metrics induced by the mechanical Hamiltonian system. These constructions extend classical results on the relation between standard mechanical Hamiltonian systems on cotangent bundles and Reeb dynamics.
\end{abstract}

\maketitle
\begin{center}
{\small\it ULL-CSIC Geometr\'{\i}a Diferencial y Mec\'anica Geom\'etrica, Departamento de Matem\'aticas, Estad\'{\i}stica e Investigaci\'on Operativa
and Instituto de Matem\'aticas y Aplicaciones (IMAULL)}\\{\small\it University of La Laguna, Spain}
\\[5pt]
{\small\it e-mail: diglesia@ull.edu.es, jcmarrer@ull.edu.es, mepadron@ull.edu.es}
\end{center}

\setcounter{section}{0}

\section{Introduction}
\setcounter{equation}{0}
\subsection{Jacobi metrics, contact structures and Reeb dynamics}
For a mechanical system on a configuration manifold $Q$, with Hamiltonian $H_{(g,V)}\colon T^*Q\to \mathbb{R}$ given as the sum of the kinetic energy associated to a Riemannian metric $g$ on $Q$ and a potential energy $V\colon Q\to \mathbb{R}$, the Jacobi-Maupertuis Principle of Least Action states that fixing a constant energy $e$, for the points in the open subset $U_e=\{q\in Q \,\colon\, V(q)<e\}$ of $Q$, the solutions can be restricted to the energy surface $H=e$ and they minimize the action given by $\int \sqrt{2(e - V(q))} ds$ \cite{Arnold}. An interesting connection between the classical Jacobi-Maupertuis principle and the Routh reduction has been discussed in \cite{B,G}.

\medskip
From a geometric perspective, using 
the Riemannian metric $g$ and the energy 
$V$,  fixing $e\in \mathbb{R}$, there is a Riemannian 
metric $g_e$ on $U_e$, the Jacobi metric, in such a way that the 
geodesics of $g_e$ with velocity 1 are, up to 
reparametrization, the trajectories of the system with fixed energy $e$. In addition, one can construct a contact structure on the spheric cotangent bundle $\mathbb{S}_{g_e}(T^*U_e)$ of radius 1, with respect to the Jacobi metric $g_e$, and the corresponding Reeb vector field is just the restriction to $\mathbb{S}_{g_e}(T^*U_e)$ of the Hamiltonian vector field of the kinetic energy ${\kappa _{g_e}}$  associated with the Jacobi metric $g_e$ on $U_e$, that is, the geodesic flow on the Riemannian space $(U_e,g_e)$ (for more details see, for instance, \cite{Godbillon}).

Why are these results relevant? Some answers for this question are the following:

\begin{itemize}

\item In relation with geodesic flows:  The negative curvature of a compact Riemannian manifold has a strong
impact on the behavior of the geodesic flow (this is related to the exponential instability of geodesics on  the manifold). For instance, almost all phase trajectories are dense in $\kappa ^{-1}_{g_e}(\frac{1}{2})$. This is also related with the ergodicity of the system (see Anosov's monograph \cite{Anosov}; see also Arnold's book \cite{Arnold}). 

On the other hand, from the previous discussion one can deduce that the dynamics on the level set of the Hamiltonian function is essentially Reeb dynamics. This implies strong consequences. In particular, the Weinstein conjecture \cite{Weinstein} claims that on a compact contact manifold, its Reeb vector field should carry at least one periodic orbit. The Weinstein conjecture has been  proved for contact hypersurfaces in ${\mathbb {R}^{2n}}$ \cite{Viterbo},  for  cotangent bundles \cite{Hofer-Viterbo} or for  closed $3$-dimensional manifolds \cite{Taubes}. For closed higher dimension manifolds is an open problem.

\item In relation with integrability: First integrals for the mechanical Hamiltonian system $(T^*U_e,H_{(g,V)})$ with configuration space $U_e$ are just first integrals for the kinetic Hamiltonian system $(T^*U_e, \kappa_{g_e})$.
On the other hand, linear first integrals on the phase space for $g_e$ can be obtained using Noether Theorem for the cotangent lift of an isometric action for the Jacobi metric $g_e$ on $U_e$ (see, for instance, \cite{Abraham-Marsden}). But, also, quadratic (or higher order) first integrals on the phase space for $g_e$ can be obtained  looking for Killing tensors. See, for instance, \cite{Visinescu} and the references therein.
\end{itemize}
We also remark that, very recently, the classical results on the relation between standard mechanical Hamiltonian systems, geodesic flows and Reeb dynamics have been extended for standard mechanical systems subjected to linear nonholonomic constraints
(see \cite{AnMaMa}). 
 
\subsection{Poisson and Jacobi structures} As it is well-known, Poisson manifolds are a natural generalization of symplectic manifolds and play an important role in Classical Mechanics. In fact, 
Poisson brackets appear,  in a natural way, in the study of several mechanical systems such as  
systems with constraints or in the reduction of systems  with symmetries.  A particular type of 
Poisson structures is the linear one over the dual $A^*$ of a vector bundle $A\to M$, i.e. Poisson 
brackets closed for fiberwise linear functions (see \cite{Courant}). For instance, the Poisson structure induced by  the canonical symplectic $2$-form on the cotangent bundle $T^*Q$ of an $n$-dimensional manifold $Q$  
is an example of a linear Poisson structure.

\medskip
On the other hand, Jacobi manifolds are natural 
extensions of Poisson manifolds and, in addition, they 
include contact manifolds (like the case of ${\mathbb 
S}_g(T^*Q)$) or locally conformal symplectic 
manifolds. More precisely, a Jacobi structure on a 
manifold $M$ is a bracket of functions on $M$ which is a 
local Lie algebra in the sense of Kirillov \cite{kirillov}. 
Alternatively, a Jacobi structure on $M$ is a pair $(\Lambda,E)$, where $\Lambda$ is a $2$-vector and $E$ is a vector field on $M$ (the Reeb vector field of the Jacobi structure) such that $[\Lambda,\Lambda]=2\Lambda\wedge E$ and $[E,\Lambda]=0$, 
with $[\cdot ,\cdot ]$ the Schouten-Nijenhuis bracket (see \cite{Li}). For a standard mechanical Hamiltonian system $(Q,H_{(g,V)})$  and a fixed energy $e$, the spheric cotangent bundle ${\mathbb S}_{g_e}(T^*U_e)$ associated with the Jacobi metric $g_e$, admits a Jacobi structure (in fact, a contact structure) and its Reeb vector field is, up to reparametrization, the restriction to ${\mathbb S}_{g_e}(T^*U_e)$ of the Hamiltonian vector field of $H_{(g,V)}$.

\subsection{Aim of the paper}
Now, we will consider a more general situation. Let $\tau\colon A\to Q$ be a vector bundle over $Q$ such that the dual bundle $A^*$ admits a linear Poisson structure and let $H_{(g,V)}\colon A^*\to \R$ be a Hamiltonian function of mechanical type associated with a bundle metric $g$ on $A$ and a potential energy $V\colon Q\to \R\in C^\infty (Q)$. For a fixed energy $e$, we can define (as in the standard case) the corresponding Jacobi bundle metric $g_e$ on $A_e=\tau ^{-1}(U_e)$ and the spheric dual bundle ${\mathbb S}_{g_e}(A^*_e)$ associated with $g_e$. Then, a natural question arises: is it possible to define a Jacobi structure (not necessarily a contact one) on ${\mathbb S}_{g_e}(A^*_e)$ such that the Reeb vector field is the restriction to ${\mathbb S}_{g_e}(A^*_e)$ of the Hamiltonian vector field on $A^*_e$ associated with the Hamiltonian function $H_{(g,V)}$? In this paper, we will give an affirmative answer to this question (see Theorems \ref{prop5.3} and \ref{estructura-S-estrella-e}).

\subsection{Structure of the paper} The paper is organized as follows. In Section 2, we will 
review the geometric description of the relation between the dynamics of mechanical Hamiltonian systems on the cotangent bundle of a Riemannian manifold, geodesic flows and Reeb dynamics and we will present some motivating examples. In Section 3, which contains the main results of the paper, we extend the previous relation for mechanical Hamiltonian systems with respect to fiberwise linear Poisson structures on vector bundles (see Theorems \ref{prop5.3} and \ref{estructura-S-estrella-e}). In Section 4, we will apply our main results to mechanical Hamiltonian systems with respect to linear Poisson structures on the dual bundle of an Atiyah vector bundle and linear semi-direct Poisson structures. As particular cases of the previous situation, we discuss the coupled planar pendula and the heavy top. Finally, in Section 5, we point out some interesting future lines of research opened up by the results of this paper.

\section{Motivation: The classical problem}\label{subsection2.1}
First of all, we will review the relation between the dynamics of mechanical Hamiltonian systems on the cotangent bundle of a Riemannian manifold and the Reeb dynamics (for more details see, for instance, \cite{Godbillon}).

Let $(Q,g)$ be a Riemannian manifold of dimension $n$ and denote by $\lambda_Q$ the Liouville $1$-form on $T^*Q$ and by $\omega _Q=-d\lambda _Q$ the corresponding symplectic 2-form on $T^*Q$.  Consider the kinetic Hamiltonian function $\kappa _g\colon T^*Q\to \R$ induced from the metric $g$, 
given by
\[
\kappa _g(\alpha _q)=\frac{1}{2} \|\alpha _q\|_g^2,\qquad \mbox{for } \alpha _q\in T^*_qQ,
\]
where $\|\cdot,\cdot\|_g$ is the norm induced by $g$ on $T^*Q$. Then, the spheric cotangent bundle of radius 1, $\mathbb{S}_g(T^*Q)=\kappa _g^{-1}(\frac{1}{2})$, is a regular hypersurface of $T^*Q$ and
\[
\eta :=\iota ^*(\lambda _Q)
\]
defines a contact structure on $\mathbb{S}_g(T^*Q)$, being $\iota \colon \mathbb{S}_g(T^*Q)\to T^*Q$ the canonical inclusion. We recall that the  $1$-form $\eta$ is a  {\it contact $1$-form } if 
\[
\eta\wedge d\eta\wedge\cdots^{(n-1}\cdots\wedge d\eta\in \Omega^{2n-1}(\mathbb{S}_g(T^*Q))
\]
 is a volume form on $\mathbb{S}_g(T^*Q).$ On a contact manifold there is a distinguished vector field ${\mathcal R}$, {\it the Reeb vector field}, characterized by  $i_R\eta=1$ and $i_Rd\eta=0.$ In our particular case, the Hamiltonian vector field $X_{\kappa _g}\in \mathfrak{X}(T^*Q)$ of $\kappa _g$ is the geodesic flow of $g$ in $T^*Q$. This means that the trajectories of the Hamiltonian system $(Q,\kappa _g)$ (that is, the projection via $\tau ^* _Q\colon T^*Q\to Q$ of the integral curves of $X_{\kappa _g}$) are the geodesics of the metric $g$. In addition, the Reeb vector field ${\mathcal R}$ associated with the contact manifold $(\mathbb{S}_g(T^*Q),\eta)$ is just the restriction to $\mathbb{S}_g(T^*Q)$  of $X_{\kappa_g}$, that is,
\[
{\mathcal R}=(X_{\kappa _g}{})_{| \mathbb{S}_g(T^*Q)}.
\]
Indeed, using that 
$i_{X_{\kappa _g}} \lambda _Q =2\kappa _g$  and $i_{X_{\kappa _g}} \omega _Q =d \kappa _g$,
we deduce that 
\begin{eqnarray*}
i_{{\mathcal R}}  (d \iota ^* \lambda _Q )&=& - \iota ^* (i_{X_{\kappa _g}} \omega _Q) = - d (\kappa _g\circ \iota )=0,\\
i_{{\mathcal R}}   (\iota ^* \lambda _Q )&=&  \iota ^* (i_{X_{\kappa _g}} \lambda _Q) =2 (\kappa _g\circ \iota )=1.
\end{eqnarray*}
Moreover, there is a symplectomorphim between the symplectic open subset  $T^*Q- \mathcal{O}_Q$, where $\mathcal{O}_Q$ is the zero-section on $T^*Q$, and the symplectification of $(\mathbb{S}_g(T^*Q),\iota ^* (\lambda _Q))$ i.e. the manifold $\mathbb{S}_g(T^*Q)\times \R$  endowed with the symplectic structure $\omega _{\mathbb{S}_g(T^*Q)\times \R}=-e^t (d\iota ^*\lambda_Q -\iota ^*\lambda _Q\wedge dt)$. The symplectic isomorphism $\varphi  \colon  \mathbb{S}_g(T^*Q)\times \R \mapsto  T^*Q-\mathcal{O}_Q$ is given by  $(\alpha _q , t)\mapsto  \varphi  (\alpha _q, t)=e^t \, \alpha _q$.

\medskip
Now, suppose that the Hamiltonian function  is defined by
\[
H_{(g,V)}:= \kappa _g+V\circ \tau^* _Q,
\]
$V\in C^\infty (Q)$ being the potential energy.  Let $e \in \mathbb{R}$ such that
$U_e = \{ q \in Q \, |\, V(q)<e \}$
is a non-empty set of $Q$.  For instance, if $V$ is bounded above by a certain $e\in \mathbb{R}$ (which happens, in particular, when $Q$ is compact), $U_e=Q$.
Denote by $T^*U_e$ the cotangent bundle of $U_e$ given by
\[
T^*U_e = \bigcup_{q \in  U_e} T_q^*Q,
\]
and by $\tau^*_{U_e}:T^*U_e\to U_e$ the 
corresponding projection. Now, we consider the Riemannian metric on $U_e$
\[
g_e = 2 (e - V_{| U_e}) g_{| U_e}
\]
and the kinetic Hamiltonian function on $T^*U_e$ given by
\[
\kappa_{g_e}(\alpha _q) = \frac{1}{2}\|\alpha _q\|_{g_e}^2 = \frac{1}{4(e-V(q))} \|\alpha _q\|_g^2=\frac{1}{2(e-V(q))} \kappa_g(\alpha_q),\qquad \mbox{for }\alpha_q\in T^*_qU_e.
\]
$g_e$ is the Jacobi metric and the spheric cotangent bundle of radius 1 associated with $g_e$ is 
\[
\mathbb{S}_{g_e}(T^*U_e)=\kappa ^{-1}_{g_e}(\frac{1}{2})=\{ \alpha _q\in T^*U_e \; : \; \kappa _g(\alpha _q) =e-V(q) \} = H^{-1}_{(g,V)}(e)
\]
is a submanifold of codimension 1 of $T^*U_e$ and a bundle over $U_e$ with fiber by the point $q\in U_e$, the sphere of center $0_q\in T^*_qQ$ and radius $\sqrt{2(e-V(q))}$, with respect to the metric $g$. Moreover, 
\[
X_{\kappa_{g_e}}{}_{|\mathbb{S}_{g_e}(T^*U_e)}=\frac{1}{2( e-V\circ \tau^*_{U_e})}X_{H_{(g,V)}|\mathbb{S}_{g_e}(T^*U_e)}.
\]
Thus, the trajectories of the Hamiltonian system $(U_e, H_{(g,V)}{}_{|T^*U_e})$ associated with solutions on 
$\mathbb{S}_{g_e}(T^*U_e)=H_{(g,V)}^{-1}(e)$ are, up to reparametrization, geodesics of the Riemannian metric $g_e$ on $U_e$. In fact, if $c\colon I\to U_e$ and $c_e\colon I_e\to U_e$, with $0\in I\cap I_e$, are trajectories of the corresponding systems with the same initial condition, then there is a strictly increasing reparametrization $h\colon I\to I_e$ such that $c(s)=c_e(h(s)),$
\[
\frac{dh}{ds}=2(e-V\circ c )\qquad\mbox{ and } \qquad h(0)=0.
\]

On the other hand, $\mathbb{S}_{g_e}(T^*U_e)$ is a contact manifold and the corresponding Reeb vector field is just 
\[
{\mathcal R}=X_{\kappa_{g_e}}{}_{|\mathbb{S}_{g_e}(T^*U_e)}=\frac{1}{2(e-V\circ \tau^*_{U_e})}X_{H_{(g,V)}|\mathbb{S}_{g_e}(T^*U_e)}.
\]
\begin{remark}
An extension of the Maupertuis principle for a mechanical Hamiltonian function $H_{(g,V)}:= \kappa _g+V\circ \tau^* _Q$, where $g$ is Lorentz metric, has been considered, for instance, in  \cite{Lorentz}.
\end{remark} 

\subsection{The case of the harmonic oscillator}
We consider the Hamiltonian function on the cotangent bundle $T^*\R^2$ 
\[
H:T^*\R^2\to \R,\;\;\; H(q,p)=\frac{1}{2}(\sum_{i=1,2}p_i^2 +\sum_{i=1,2}q_i^2).
\]
We take an energy level $e> 0$ and we denote by $U_e$ the open subset 
$$U_e=\{(q_1,q_2)\in \R^2\, : \,q_1^2+ q_2^2<{e}\}.$$

In this case, the level set of $H$ for $e$ is
$$H^{-1}(e)=\{(q_1,q_2,,p_1,p_2)\in U_e\times \R^2\, : \, p_1^2+p_2^2+(q_1^2+ q_2^2)=2e\},$$
 which can be described as the level set for the value $\frac{1}{2}$ of the kinetic energy for the Riemannian metric on $U_e$
\[
g_e= 2(e-q_1^2-q_2^2) (dq_1\otimes dq_1+dq_2\otimes dq_2).
\]

The trajectories on $H^{-1}(e)$ of the system $(U_e, H_{|U_e\times \R^2})$ are 
\[
t\mapsto c(t)=(A_1\sin t + B_1\cos t, A_2\sin t + B_2\cos t)
\]
with $A_1^2+A_2^2+B_1^2+B_2^2=e$. Note that all of them are periodic. Moreover, we deduce that they are, up to reparametrization, geodesics of the Riemannian metric $g_e$ on $U_e$.

\subsection{The case of the hyperbolic plane}
We consider the hyperbolic plane  
$${\mathbb H}=\{(x,y)\in \R^2\, : \, y>0\}=\R\times \R^+$$ 
with the metric 
\[
g=\frac{1}{y^2}(dx\otimes dx + dy\otimes dy).
\]

Let $\Gamma$ be the discrete group of isometries generated by the isometry $\varphi_a:{\mathbb H}\to {\mathbb H}$ given by  $\varphi_a(x,y)=(a^2x,a^2y)$ with $a\in {\mathbb N}$,
$a\neq 0,1$. This group acts on ${\mathbb H}$ and  ${\mathbb H}/\Gamma$ is a $2$-dimensional compact Riemannian manifold of constant curvature $-1$, with the metric $\tilde{g}$ induced by $g$ on  ${\mathbb H}/\Gamma.$

If $V:{\mathbb H}\to \R$ is an invariant function with 
respect to $\Gamma$, on the cotangent bundle 
$T^*({\mathbb H}/\Gamma)$ we can consider the 
Hamiltonian function
\[
H:T^*({\mathbb H}/\Gamma)\to \R,\;\;\; H(z,\alpha_z)=\frac{1}{2}\|\alpha_z\|^2_{\tilde{g}} +\tilde{V}(z),
\]
with $\tilde{V}: {\mathbb H}/\Gamma\to \R$ the function characterized by 
$$\tilde{V}\circ \pi={V},$$
$\pi:{\mathbb H}\to {\mathbb H}/\Gamma$ being the 
canonical projection.

In this case, if $e$ is a positive real number such that $\tilde{V}$ is bounded by $e$, then  the $3$-dimensional submanifold $H^{-1}(e)$ is just a $S^1$-bundle on ${\mathbb H}/\Gamma$ and it can be described as the level set for the value $\frac{1}{2}$ of the kinetic energy for the Riemaniann metric on ${\mathbb H}/\Gamma$
\[
\tilde{g}_e=2(e-\tilde{V})\tilde{g},
\]
induced by the Jacobi metric $g_e=2(e-V)g$ on ${\mathbb H}$. 

The sectional curvature $c_{g_e}(x,y)$ at the point $(x,y)\in \mathbb{H}$ of the Riemannian manifold $({\mathbb H},g_e)$ is just 
\[
c_{g_e}(x,y)= \frac{1}{2}\Big (- y^2+y^4(\frac{\partial^2V}{\partial x^2} + \frac{\partial^2V}{\partial y^2})\Big ) =\frac{1}{2} ( -y^2+y^4\Delta(V)).
\]

Therefore, if $\Delta(V)<\frac{1}{y^2}$, we have that $({\mathbb H}/\Gamma,\tilde{g_e})$ is a compact Riemannian manifold with negative curvature.  

The trajectories of the system $({\mathbb H}/\Gamma, H)$ on the level set  $H^{-1}(e)=\mathbb{S}_{\tilde{g}_e}(T^*({\mathbb H}/\Gamma))$ are, up to reparametrization, geodesics of the Riemannian manifold $({\mathbb H}/ \Gamma ,{g_e})$. Using the results of Anosov \cite{Anosov}, we deduce that  the periodic trajectories determine an everywhere dense set in  $H^{-1}(e)=S_{\tilde{g}_e}(T^*({\mathbb H}/\Gamma))$ and the system is ergodic.
%\begin{remark}
%An extension of the Maupertuis principle for a mechanical Hamiltonian function $H_{(g,V)}:= \kappa _g+V\circ \tau^* _Q$, where $g$ is Lorentz metric, has been considered, for instance, in  \cite{Lorentz}.
%\end{remark} 

\section{The more general case of a linear Poisson structure}
In this section, we will extend the results in the previous section for mechanical Hamiltonian systems with respect to linear Poisson structures on a vector bundle. 

Let $\tau _A: A \to Q$ be a vector bundle of rank $n$
endowed with a linear Poisson structure on the dual 
vector bundle $\tau_{A^*}:A^*\to Q$, i.e. the 
corresponding Lie bracket $\{\cdot,\cdot\}_{A^*}$ of 
functions on $A^*$ satisfies that the bracket of
two fiberwise linear functions is again a fiberwise linear 
function. Note that there exists a one-to-one 
correspondence between the set of fiberwise linear 
functions on $A^*$ and the space $\Gamma(A)$ of sections of $\tau _A: A \to Q$ which is defined 
by
\[
\begin{array}{rcl}
\hat{ } \; \colon \Gamma (A) &\mapsto  & 
\{ \mbox{ fiberwise linear functions on } A^* \}\\ X & \mapsto & \hat{X}
\end{array}
\]
with
\[
\hat{X}(\alpha _q)=\langle \alpha _q, X(q)\rangle ,\qquad \mbox{for } \alpha_q\in A^*_q \mbox{ and } q\in Q.
\]
Thus, for the linear Poisson structure $\{\cdot,\cdot\}_{A^*}$ on $A^*$, we have that 
\[
\{ \hat{X},\hat{Y} \} _{A^*} =-\widehat{\lcf X, Y\rcf}, \qquad \mbox{for }X, Y\in \Gamma (A),
\]
with $\lcf X, Y\rcf \in \Gamma(A)$.
\begin{remark}\label{rem:Liealg}
The Poisson structure $\{\cdot,\cdot\}_{T^*Q}$ associated with the canonical symplectic structure $\omega _Q$ on $T^*Q$ is linear. In fact, if $X,Y\in \mathfrak{X}(Q)$ then
\[
\{ \hat{X},\hat{Y} \} _{T^*Q} =-\widehat{[X, Y]},
\]
where $[X,Y]\in \mathfrak{X}(Q)$ is the standard Lie bracket of vector fields on $Q$.

More generally, linear Poisson structures on a vector bundle $A^*$ are in one-to-one correspondence with Lie algebroid structures on the dual vector bundle $A$ (see \cite{Courant}). We recall that a Lie algebroid structure $(\lcf \cdot ,\cdot \rcf , \rho )$ on a vector bundle $A\to M$ is a Lie algebra bracket $\lcf \cdot ,\cdot \rcf$ on $\Gamma (A)$ and a bundle map $\rho \colon A\to TM$, called the anchor map, such that
\[
\lcf X, fY\rcf =f\lcf X, Y\rcf + \rho (X)(f)Y,  \; \; \mbox{ for } X, Y\in \Gamma(A), f\in C^\infty(M).
\]
\end{remark}
For a linear Poisson structure $\{\cdot,\cdot\}_{A^*}$ on $A^*$, it follows that   
\[
\{f\circ \tau_{A^*},\hat{X} \}_{A^*}=\rho (X)(f)\circ \tau _{A^*},
\qquad 
\{f\circ \tau_{A^*}, h\circ \tau_{A^*}\}_{A^*}=0,
\]
for $f,h\in C^\infty(Q)$ and $X\in \Gamma (A)$ (see, for instance, \cite{Courant}).

So, if $(q^i,y_\alpha )$ are local coordinates on $A^*$, associated with local coordinates $(q^i)$ on $Q$ and local coordinates $(y_\alpha )$ coming from a local basis $\{ e^\alpha \}$ of sections of $A^*$, then
\[
\hat{e}_\alpha=y_\alpha
\]
where $\{e_\alpha\}$ is the dual basis of sections of $A$. Moreover, if 
\[
\lcf e_\alpha ,e_\beta \rcf= C_{\alpha \beta}^\gamma e_\gamma, \qquad \rho (e_\alpha )=\rho ^i_\alpha \frac{\partial }{\partial q^i},
\]
we have that 
\[
\{ y_\alpha ,y_\beta \}_{A^*} =-C_{\alpha \beta}^\gamma y_\gamma, \qquad \{ q^i,y_\alpha \}_{A^*}=\rho^i_\alpha,\qquad \{ q^i ,q^j\}_{A^*} =0,
\]
and the Poisson 2-vector $\Pi _{A^*}$, associated with the linear Poisson bracket $\{\cdot ,\cdot \}_{A^*}$, has local expression
\begin{equation}\label{Local-expression-Poisson-2-vector}
\Pi _{A^*}(q,y)=\rho ^i_\alpha (q)\frac{\partial }{\partial q^i}\wedge \frac{\partial }{\partial y_\alpha}-\frac{1}{2} C_{\alpha\beta}^\gamma (q) y_\gamma \frac{\partial }{\partial y_\alpha}\wedge \frac{\partial }{\partial y_\beta}.
\end{equation}

The previous linearity conditions can be described in  term of the homogenity with respect to the  Liouville vector field $\Delta_{A^*}$ of $\tau_{A^*}$ which is locally defined by
\begin{equation}\label{Liou}
\Delta_{A^*}=\sum_\alpha y_\alpha \frac{\partial}{\partial y_\alpha}.
\end{equation}
Indeed, $F:A^*\to \R\in C^\infty (A^*)$ is fiberwise linear if and only if 
$\Delta_{A^*}(F)=F$.  Then, one can prove that the $2$-vector $\Pi_{A^*}$ on $A^*$ associated with a Poisson bracket on $A^*$ is linear   if and only if 
\[
{\mathcal L} _{\Delta_{A^*}}\Pi_{A^*} =
-\Pi_{A^*}.
\]
Now, suppose that we have  a bundle metric $g$ on $A$. Consider the Hamiltonian function 
$\kappa_{(A,g)}: A^*\to {\R}$ given by 
\[
\kappa_{(A,g)}(\alpha_q)=\frac{1}{2} \|\alpha_q\|_g^2, \mbox{ for all } \alpha_q\in A^*_q \mbox{ and } q\in Q,\]
where $\|\cdot\|_g$ is the norm on $A^*$ induced 
by $g$. If $(g^{\alpha \beta})$ is the matrix of the bundle metric $g$ on $A^*$ with respect to a local basis $\{ e^\gamma \}$ of $\Gamma (A^*)$, then the local expression of $\kappa _{(A,g)}$ is
\begin{equation}\label{3.4'}
\kappa_{(A,g)} (q,y)=\frac{1}{2} g^{\alpha \beta }(q)y_\alpha y_\beta.
\end{equation}
Like in the classical case on $T^*Q$,  we consider the spheric dual bundle of radius 1
\[
{\mathbb S}_g(A^*)=\kappa_{(A,g)}^{-1}(\frac{1}{2}),
\]
which is a fiber bundle on $Q$. 

\medskip
 Extending the construction in Section 
 \ref{subsection2.1}, we will show that on  ${\mathbb S}_g(A^*)$ one can define a Jacobi structure.  
 
 \medskip
 We recall that a {\it Jacobi structure} on a manifold $M$ is 
 a pair $(\Lambda,E)$, with $\Lambda\in{\mathcal V}^2(M)$ a $2$-vector on $M$ and $E$ a vector field on $M$ such that
\[
[\Lambda,\Lambda]=2\Lambda\wedge E\mbox{ and } {\mathcal L}_E \Lambda =0.
\]
Here $[\cdot ,\cdot ]$ is the Schouten-Nijenhuis bracket. 
Every Jacobi structure $(\Lambda,E)$ induces a 
bracket  $\{\cdot,\cdot\}:C^\infty(M)\times C^\infty(M)\to 
C^\infty(M)$  on the space of real $C^\infty$-functions on $M$ given by 
\begin{equation}\label{Jacobi:bracket}\{f_1,f_2\}=\Lambda(df_1,df_2) + f_1 E(f_2) -f_2 E(f_1),\end{equation}
which is ${\R}$-bilinear, skew-symmetric, satisfies the Jacobi identity and it 
associates a first order differential operator with any function. Conversely, every bracket of functions on the manifold $M$ with these properties induces a Jacobi structure on $M$ \cite{kirillov, Li}. This bracket is called the {\it Jacobi bracket  on $M$}. 

\medskip
Contact structures are particular examples of Jacobi structures. In fact, if $\eta$ is a contact structure on $M$ then the vector bundle morphism $\flat _\eta \colon TM\to T^*M$ given by
\[
\flat _\eta (v)=i_vd\eta (\tau _M(v))+\langle \eta ( \tau _M(v)),v\rangle \eta (\tau_M(v)),
\]
for $v\in TM$, with $\tau_M\colon TM\to M$ the canonical projection, is a vector bundle isomorphism. So,
we can consider the 2-vector $\Lambda_\eta$ on $M$ defined by
\[
\Lambda _\eta (\alpha ,\beta )=-d\eta (\flat _\eta ^{-1} (\alpha ), \flat _\eta ^{-1} (\beta )), \qquad \mbox{for }\alpha ,\beta \in \Omega^1(M).
\]
Moreover, if $\mathcal{R}$ is the Reeb vector field, one may see that the couple  $(\Lambda _\eta ,E_\eta =-\mathcal{R})$ is a Jacobi structure on $M$ (see, for instance, \cite{Li}). 
\begin{remark}\label{contact-spheric}
Let $g$ be a Riemannian metric on $Q$, $\mathbb{S}_g(T^*Q)$ the spheric cotangent bundle of radius 1 and $\eta$ the canonical contact structure on $\mathbb{S}_g(T^*Q)$ (see Section \ref{subsection2.1}). Then, the Jacobi structure $(\Lambda_{\mathbb{S}_g(T^*Q)}, E_{\mathbb{S}_g(T^*Q)})$ on $\mathbb{S}_g(T^*Q)$ associated with $\eta$ is given by
\[
\Lambda_{\mathbb{S}_g(T^*Q)}= (\Pi _{T^*Q} +\Delta_{T^*Q} \wedge X_{\kappa_g} )_{|\mathbb{S}_g(T^*Q)}, \qquad 
E_{\mathbb{S}_g(T^*Q)}=-(X_{\kappa _g})_{|\mathbb{S}_g(T^*Q)},
\]
where $\Pi_{T^*Q}$ is the canonical Poisson 2-vector on $T^*Q$ and $X_{\kappa_g}=-i(d\kappa _g) \Pi_{T^*Q}$ is the Hamiltonian vector field of $\kappa _g$, with $\kappa _g\colon T^*Q\to \R$ the kinetic energy on $T^*Q$ induced by the Riemannian metric on $g$. In fact, if $X,Y\in \mathfrak{X}(T^*Q)$ then
\[
\omega _Q(X,Y)=\Pi _{T^*Q} (\flat _{\omega _Q}(X),\flat _{\omega _Q}(Y)),
\]
with $\flat _{\omega _Q}\colon \mathfrak{X}(T^*Q)\to \Omega ^1(T^*Q)$ the isomorphism of $C^\infty (T^*Q)$-modules given by $\flat_{\omega_Q}(X)=i_X\omega _Q$.
So, using that $i^\ast \omega _Q=-d\eta$, it follows that 
\[
-d\eta (X,Y)=\Pi _{T^*Q}(i_Xd\eta ,i_Yd\eta )=\Pi _{T^*Q} (\flat _\eta X,\flat _\eta Y)+\eta (Y) \Pi _{T^*Q} (\eta ,\flat _\eta X)-\eta (X) \Pi _{T^*Q} (\eta ,\flat _\eta Y).
\]
Now, since $i_{\lambda _Q}\Pi _{T^*Q}=\Delta_{T^*Q}$, we deduce that
\[
-d\eta (X,Y)=\Pi _{T^*Q} (\flat _\eta X,\flat _\eta Y)+(\flat _\eta X )( \Delta _{T^*Q} ) \eta (Y)  -(\flat _\eta Y )( \Delta _{T^*Q} ) \eta (X) .
\]
On the other hand, the Reeb vector field of $\mathbb{S}_g(T^*Q)$ is $\mathcal{R}= (X_{\kappa_g})_{|\mathbb{S}_g(T^*Q)}$ (see Section \ref{subsection2.1}). This implies that
\[
(\flat _\eta X)(X_{\kappa _g})=\eta (X)
\]
and, thus,
\[
-d\eta (X,Y)=( \Pi _{T^*Q}+ \Delta _{T^*Q} \wedge X_{\kappa _g})_{|\mathbb{S}_g(T^*Q)} (\flat _\eta X,\flat _\eta Y)
\]
which proves the result.
\end{remark}
The previous remark provides a good motivation to claim the following theorem.
\begin{theorem}\label{prop5.3}
Let $\tau_A:A\to Q$ be a vector bundle such that  $\Pi_{A^*}$ is a linear Poisson structure on $A^*$ and let  $g:A\times A\to {\mathbb R}$ be a bundle metric on  $A$.  Then:
\begin{enumerate}
\item [$(i)$] The pair $(\Lambda_{A^*},E_{A^*}) \in \mathcal{V}^2(A^*)\times
\frak X(A^*)$ given by
\begin{equation}\label{Jacobistructure}
\Lambda_{A^*} = \Pi_{A^*} + \Delta_{A^*}\wedge
X_{\kappa_{(A,g)}}, \qquad E_{A^*}=-
X_{\kappa_{(A,g)}}
\end{equation}
is a Jacobi structure on $A^*,$ where $\Delta_{A^*}$ is
 the Liouville vector field on the vector bundle $\tau_{A^*}:A^*\rightarrow M$ and
$X_{\kappa_{(A,g)}} \in {\mathfrak X}(A^*)$ is the Hamiltonian vector field of ${\kappa_{(A,g)}}\in C^\infty(A^*)$ with respect to the linear Poisson structure $\Pi_{A^*}$, i.e.
$$
X_{\kappa_{(A,g)}}=-i(d{\kappa_{(A,g)}})\Pi_{A^*}.
$$

\item[$(ii)$] The Jacobi structure $(\Lambda_{A^*}, E_{A^*})$ induces  a Jacobi structure $(\Lambda_{\mathbb{S}_g(A^*)},E_{{\mathbb{S}_g(A^*)}})$ on ${\mathbb{S}_g(A^*)}$ such that,  if $\mathcal{O}_Q$ is  the zero section of $A^*$, the map
\[
\Psi:{{\mathbb{S}}_g(A^*)}\times\R \to A^*-\mathcal{O}_Q,\;\;\; \Psi(\beta_q,t)=e^t\beta_q
\]
is a Poisson isomorphism when we consider on $A^*-\mathcal{O}_Q$ the induced Poisson structure by $\Pi_{A^*}$ and on ${\mathbb{S}_g(A^*)}\times \R$ the Poissonization of the Jacobi structure on 
${\mathbb{S}_g(A^*)},$ i.e.
\[
e^{-t}\Big ( \Lambda_{\mathbb{S}_g(A^*)} - E_{\mathbb{S}_g(A^*)}\wedge \frac{\partial }{\partial t}\Big ).
\]
\item[$(iii)$] The Jacobi bracket $\{\cdot,\cdot\}_{\mathbb{S}_g(A^*)}$ on $\mathbb{S}_g(A^*)$  and the Poisson bracket $\{\cdot,\cdot\}_{A^*}$
on $A^*$ are related as follows
\[
\begin{array}{rcl}
\{G_1\circ \iota,G_2\circ \iota\}_{\mathbb{S}_g(A^*)} & =& \big(\{G_1,G_2\}_{A^*} + 
\big\{\kappa_{(A,g)},G_1\}_{A^*}(G_2-\Delta_{A^*}(G_2))-\\&&\big\{\kappa_{(A,g)},G_2\}_{A^*}(G_1-\Delta_{A^*}(G_1))\big)\circ \iota
\end{array}
\]
for all $G_1,G_2 \in C^\infty (A^*)$, with $\iota:\mathbb{S}_g(A^*)  \to A^*$ the inclusion map. 
\end{enumerate}
\end{theorem}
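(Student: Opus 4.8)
My plan is to dispatch the three parts in turn, reducing $(i)$ to Schouten--Nijenhuis identities, $(ii)$ to the homogeneity of $\Pi_{A^*}$ under $\Delta_{A^*}$, and $(iii)$ to a direct substitution that exploits the tangency of $\Lambda_{A^*}$ and $E_{A^*}$ to $\mathbb{S}_g(A^*)$. For $(i)$ I would verify the two defining relations $[\Lambda_{A^*},\Lambda_{A^*}]=2\Lambda_{A^*}\wedge E_{A^*}$ and $\mathcal{L}_{E_{A^*}}\Lambda_{A^*}=0$ by expanding $\Lambda_{A^*}=\Pi_{A^*}+\Delta_{A^*}\wedge X_{\kappa_{(A,g)}}$ with the graded Leibniz rule. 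The required inputs are: $[\Pi_{A^*},\Pi_{A^*}]=0$; the linearity relation $\mathcal{L}_{\Delta_{A^*}}\Pi_{A^*}=-\Pi_{A^*}$ of (\ref{ec5.9}); $\mathcal{L}_{X_{\kappa_{(A,g)}}}\Pi_{A^*}=0$, valid because Hamiltonian vector fields preserve the Poisson tensor; and the homogeneity identity $[\Delta_{A^*},X_{\kappa_{(A,g)}}]=X_{\kappa_{(A,g)}}$. The last is the only genuinely computational point and follows from $\Delta_{A^*}(\kappa_{(A,g)})=2\kappa_{(A,g)}$ (fibrewise quadratic homogeneity) together with (\ref{ec5.9}), via $\mathcal{L}_{\Delta_{A^*}}(i(d\kappa_{(A,g)})\Pi_{A^*})=i(d(\Delta_{A^*}(\kappa_{(A,g)})))\Pi_{A^*}+i(d\kappa_{(A,g)})\mathcal{L}_{\Delta_{A^*}}\Pi_{A^*}$. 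With these in hand the cross term contributes $[\Pi_{A^*},\Delta_{A^*}]\wedge X_{\kappa_{(A,g)}}=\pm\,\Pi_{A^*}\wedge X_{\kappa_{(A,g)}}$, the term $[\Delta_{A^*}\wedge X_{\kappa_{(A,g)}},\Delta_{A^*}\wedge X_{\kappa_{(A,g)}}]=2\,X_{\kappa_{(A,g)}}\wedge\Delta_{A^*}\wedge X_{\kappa_{(A,g)}}$ vanishes, and $\mathcal{L}_{X_{\kappa_{(A,g)}}}(\Delta_{A^*}\wedge X_{\kappa_{(A,g)}})=-X_{\kappa_{(A,g)}}\wedge X_{\kappa_{(A,g)}}=0$; matching signs (fixed once the Schouten convention is pinned down by $\mathcal{L}_{\Delta_{A^*}}\Pi_{A^*}=-\Pi_{A^*}$) yields $2\Lambda_{A^*}\wedge E_{A^*}$ and $0$ respectively.

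For $(ii)$ I would recognise $(A^*-\mathcal{O}_Q,\Pi_{A^*})$ as the Poissonization of a Jacobi manifold. First, $\Psi$ is a diffeomorphism, since every $\alpha_q\neq0$ factors uniquely as $e^t\beta_q$ with $\beta_q\in\mathbb{S}_g(A^*)$ (take $t=\ln\|\alpha_q\|_g$, $\beta_q=\alpha_q/\|\alpha_q\|_g$), and under $\Psi$ the field $\partial/\partial t$ corresponds to $\Delta_{A^*}$. Pulling $\Pi_{A^*}$ back and decomposing it on $\mathbb{S}_g(A^*)\times\R$ as $\tilde\Lambda+\partial/\partial t\wedge\tilde E$ with $\tilde\Lambda,\tilde E$ tangent to the slices, the homogeneity (\ref{ec5.9}) becomes $\mathcal{L}_{\partial/\partial t}\Psi^*\Pi_{A^*}=-\Psi^*\Pi_{A^*}$, which forces $\tilde\Lambda=e^{-t}\Lambda_{\mathbb{S}_g(A^*)}$ and $\tilde E=e^{-t}E_{\mathbb{S}_g(A^*)}$ for $t$-independent tensors on $\mathbb{S}_g(A^*)$, i.e. $\Psi^*\Pi_{A^*}=e^{-t}(\Lambda_{\mathbb{S}_g(A^*)}-E_{\mathbb{S}_g(A^*)}\wedge\partial/\partial t)$. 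The equation $[\Pi_{A^*},\Pi_{A^*}]=0$ then translates, through this homogeneous splitting, into the two Jacobi conditions for $(\Lambda_{\mathbb{S}_g(A^*)},E_{\mathbb{S}_g(A^*)})$ — the standard correspondence between degree $-1$ homogeneous Poisson structures and Jacobi structures on the quotient — so $\Psi$ is a Poisson isomorphism for the stated Poissonization. Evaluating at $t=0$, where $\partial/\partial t\mapsto\Delta_{A^*}$ and $\Pi_{A^*}=\Lambda_{A^*}+\Delta_{A^*}\wedge E_{A^*}$, identifies $\Lambda_{\mathbb{S}_g(A^*)}$ and $E_{\mathbb{S}_g(A^*)}$ with the tangential restrictions of $\Lambda_{A^*}$ and $E_{A^*}=-X_{\kappa_{(A,g)}}$, exactly as in the contact model of Remark \ref{contact-spheric}.

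For $(iii)$ I would evaluate the Jacobi bracket (\ref{Jacobi:bracket}) of $G_1\circ\iota$ and $G_2\circ\iota$ using $(\Lambda_{\mathbb{S}_g(A^*)},E_{\mathbb{S}_g(A^*)})$ and then substitute the identifications from $(ii)$. The decisive fact is the tangency identity $\Lambda_{A^*}(d\kappa_{(A,g)},\cdot)|_{\mathbb{S}_g(A^*)}=0$: on $\kappa_{(A,g)}^{-1}(\frac{1}{2})$ one has $\Delta_{A^*}(\kappa_{(A,g)})=2\kappa_{(A,g)}=1$ and $X_{\kappa_{(A,g)}}(\kappa_{(A,g)})=0$, so the contribution of $\Delta_{A^*}\wedge X_{\kappa_{(A,g)}}$ cancels the term $\Pi_{A^*}(d\kappa_{(A,g)},dG)=\{\kappa_{(A,g)},G\}_{A^*}$ coming from $\Pi_{A^*}$. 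Hence $\Lambda_{A^*}$ descends to $\mathbb{S}_g(A^*)$ and $\Lambda_{\mathbb{S}_g(A^*)}(\iota^*dG_1,\iota^*dG_2)=\Lambda_{A^*}(dG_1,dG_2)\circ\iota$. Writing $X_{\kappa_{(A,g)}}(G)=\{\kappa_{(A,g)},G\}_{A^*}$ (up to the sign fixed by $X_{\kappa_{(A,g)}}=-i(d\kappa_{(A,g)})\Pi_{A^*}$) and $E_{A^*}(G)=-X_{\kappa_{(A,g)}}(G)$, the bracket becomes $\Pi_{A^*}(dG_1,dG_2)$ plus the contributions $\Delta_{A^*}(G_1)X_{\kappa_{(A,g)}}(G_2)-X_{\kappa_{(A,g)}}(G_1)\Delta_{A^*}(G_2)$ from $\Delta_{A^*}\wedge X_{\kappa_{(A,g)}}$ and $G_1E_{\mathbb{S}_g(A^*)}(G_2)-G_2E_{\mathbb{S}_g(A^*)}(G_1)$ from the Reeb term; collecting the coefficients of $\{\kappa_{(A,g)},G_1\}_{A^*}$ and $\{\kappa_{(A,g)},G_2\}_{A^*}$ and recognising the factors $G_i-\Delta_{A^*}(G_i)$ reproduces the asserted formula.

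I expect the main obstacle to be the sign bookkeeping throughout — entirely controlled by the degree $-1$ relation $\mathcal{L}_{\Delta_{A^*}}\Pi_{A^*}=-\Pi_{A^*}$ and the quadratic homogeneity $\Delta_{A^*}(\kappa_{(A,g)})=2\kappa_{(A,g)}$ — together with the more conceptual tangency $\Lambda_{A^*}(d\kappa_{(A,g)},\cdot)|_{\mathbb{S}_g(A^*)}=0$. This last identity is exactly what makes the correction term $\Delta_{A^*}\wedge X_{\kappa_{(A,g)}}$ the right choice, being the linear-Poisson analogue of the contact computation of Remark \ref{contact-spheric} and guaranteeing that both $\Lambda_{A^*}$ and $E_{A^*}$ restrict to genuine tensors on $\mathbb{S}_g(A^*)$; everything else reduces to routine multivector algebra.
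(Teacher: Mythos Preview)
Your proposal is correct and follows essentially the same approach as the paper: part $(i)$ via the Schouten--Nijenhuis expansion using $[\Pi_{A^*},\Pi_{A^*}]=0$, $\mathcal{L}_{\Delta_{A^*}}\Pi_{A^*}=-\Pi_{A^*}$, $\mathcal{L}_{X_{\kappa_{(A,g)}}}\Pi_{A^*}=0$, and the homogeneity commutator $[\Delta_{A^*},X_{\kappa_{(A,g)}}]$; part $(ii)$ via the tangency $i(d\kappa_{(A,g)})\Lambda_{A^*}|_{\mathbb{S}_g(A^*)}=0$ together with the identification of $\Psi$ with the flow of $\Delta_{A^*}$ (the paper restricts first and then checks the Poisson isomorphism, whereas you run the Poissonization correspondence in the other direction, but the content is the same); part $(iii)$ by direct substitution into the Jacobi-bracket formula. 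The only discrepancy is the sign of $[\Delta_{A^*},X_{\kappa_{(A,g)}}]$ --- you write $+X_{\kappa_{(A,g)}}$, the paper writes $-X_{\kappa_{(A,g)}}$ --- but since this commutator enters the computation only wedged with $X_{\kappa_{(A,g)}}$ itself, the sign is immaterial to the result (and a direct coordinate check on $T^*\mathbb{R}$ in fact supports your sign).
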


\begin{proof} Our theorem is a consequence of some results which where proved in \cite{GM} (see also \cite{DLM,GrIgMaPaUr}). Anyway, in order to make our paper self-contained, we will give a simple proof of the theorem.

{\it (i)} It is well-known  that $X_{\kappa_{(A,g)}}$ is an infinitesimal automorphism of the Poisson structure $\Pi_{A^*}$, that is,  
\[
{\mathcal L}_{X_{\kappa_{(A,g)}}}\Pi_{A^*}=0.
\]
Then, \cite[Cor.1]{GM} implies that
$\Lambda_{A^*} = \Pi_{A^*} + \Delta_{A^*}\wedge
X_{\kappa_{(A,g)}}$, $E_{A^*}=-
X_{\kappa_{(A,g)}}$ is a Jacobi structure on $A^*$.  

\medskip 

$(ii)$ Using \eqref{Liou} and \eqref{3.4'}, we have that
$\Delta_{A^*}({\kappa_{(A,g)}})=2{\kappa_{(A,g)}}$,
that is, ${\kappa_{(A,g)}}$ is homogeneous of degree 2. Therefore, from \cite[Cor.2]{GM},  $\Lambda_{A^*}$ and $E_{A^*}$ restrict to ${\mathbb{S}_g(A^*)}$. 

On the other hand, note that the diffeomorphism $\Psi$ is just the restriction of the flow of $\Delta _{A^*}$ to ${\mathbb{S}_g(A^*)}$. Thus, we have that 
\[
T\Psi \circ \frac{\partial }{\partial t}=\Delta _{A^*} \circ \Psi.
\]
This, using \eqref{Jacobistructure}, implies that $\Psi$ is a Poisson isomorphism between ${\mathbb{S}_g(A^*)}\times \mathbb{R}$ and $A^*-\mathcal{O}_Q$.

\medskip

$(iii)$ It is a consequence from (\ref{Jacobi:bracket}) and (\ref{Jacobistructure}).
\end{proof}
\begin{remark}
From \eqref{Jacobistructure}, we deduce that the Jacobi structure $(\Lambda _{A^*},E_{A^*})$ on $A^*$ is polynomical of degree 3.
\end{remark}

Now, let $g$ be a bundle metric on $A$ and $V\in C^\infty (Q)$.  Consider the Hamiltonian function 
$H_{(A,g,V)}: A^*\to \R$ of mechanical type given by 
\begin{equation}\label{H-g-V}H_{(A,g,V)}(\alpha_q)=\kappa_{(A,g)}(\alpha_q)+V(q), \mbox{ for all } \alpha_q\in A^*_q \mbox{ and } q\in Q.\end{equation}

Suppose that $e \in \mathbb{R}$ is such that $U_e = \{ q \in Q \, : \,  V(q)<e \}$
is a non-empty subset of $Q$.  Denote by
\[
A_e = \bigcup_{q \in  U_e} A_q
\]
the corresponding vector bundle over $U_e$ and by $A^*_e$  its  dual bundle. 
Now, we consider the Jacobi bundle metric $g_e$ on $A_e$ defined by
\[
g_e = 2(e - V_{|U_e})g_{|A_e}
\]
and the corresponding Hamiltonian function $\kappa_{(A_e,g_e)}: A^*_e \to \mathbb{R}$ given by
\begin{equation}\label{H-g-e}
\kappa_{(A_e,g_e)}(\alpha_q) = \frac{1}{2}\|\alpha_q\|_{g_e}^2 = \frac{1}{4(e-V(q))} \|\alpha_q\|_g^2=\frac{1}{2(e-V(q))}\kappa_{(A_e,g)}(\alpha _q) \; \; \end{equation}
for $\alpha_q \in (A^*_e)_q$,  with  $q \in U_e.$
On the other hand, we may introduce the spheric dual bundle  on $U_e$ defined by
\begin{equation}\label{S-estrella-e}
{\mathbb S}_{g_e}(A^*) = \kappa_{(A_e,g_e)}^{-1}(\frac{1}{2}) = \bigcup_{q \in U_e}\{\alpha_q \in A^*_q \, : \, \|\alpha_q\|^2_{g} = 2(e- V(q))\}= H_{(A_e,g, V)}^{-1}(e).
\end{equation}
Then, one has the following result

%%%%%%%%%%%%%%%%%%%%%%%%%%%%%%%%%%%
\begin{theorem}\label{estructura-S-estrella-e}  
\begin{enumerate}
\item[$(i)$]
 ${\mathbb S}_{g_e}(A^*)$ is a submanifold of codimension $1$ in $A^*_e$ and if $\alpha_q \in {\mathbb S}_{g_e}(A^*)$
\begin{equation}\label{tangente}
\begin{array}{rcl}
T_{\alpha_q} {\mathbb S}_{g_e}(A^*)  &=& \{ w \in T_{\alpha _q} A^*_e \, : \, \langle dH_{(A_e,g, V)}(\alpha_q), w \rangle  = 0 \} \\&=& \{w \in T_{\alpha_q} A^*_e \, : \, \langle d\kappa_{(A_e,g_e)}(\alpha_q), w \rangle  = 0 \}.
\end{array}
\end{equation}

\item[$(ii)$] ${\mathbb S}_{g_e}(A^*) $ is a bundle over $U_e$ with fiber by $q \in U_e$ the sphere of center $0_q \in A^*_q$ and radius $\sqrt{2(e - V(q))}$, with respect to the bundle metric $g$.

\item[$(iii)$] The Hamiltonian vector fields of $H_{(A,V,g)}$ and $\kappa_{(A,g_e)}$ are tangent 
to ${\mathbb S}_{g_e}(A^*)$ and
\[
X_{H_{(A,g,V)}|{\mathbb S}_{g_e}(A^*)}=2(e-V_{|U_e})X_{\kappa_{(A,g_e)}|{\mathbb S}_{g_e}(A^*)}.
\]

\item[$(iv)$] The bundle ${\mathbb S}_{g_e}(A^*)$ admits a Jacobi bundle structure $(\Lambda_{{\mathbb S}_{g_e}(A^*)},E_{{\mathbb S}_{g_e}(A^*)})$ with
$$
\begin{array}{rcl}
\Lambda_{{\mathbb S}_{g_e}(A^*)}&=&\Pi_{A^*| {\mathbb S}_{g_e}(A^*)}+\displaystyle\frac{1}{2(e-V_{|U_e})}\Delta_{A^*|{\mathbb S}_{g_e}(A^*)}\wedge X_{H_{(A,g,V)}|{\mathbb S}_{g_e}(A^*)},\\[5pt]
E_{{\mathbb S}_{g_e}(A^*)}&=&-\displaystyle\frac{1}{2(e-V_{|U_e})}X_{H_{(A,g,V)}|{\mathbb S}_{g_e}(A^*)}.
\end{array}$$

 \item[$(v)$] If  the curve $c:I\to A_e^*$ (resp., $c_e:I_e\to A_e^*$), with 0 belonging to the interval $I$ (resp. $I_e$),  is the trajectory of the system $(A_e^*, H_{(A,g,V)})$  (resp. $(A_e^*, \kappa_{(A_e,g)}))$  such that $c(0)=\alpha_q$ (resp. $c_e(0)=\alpha_q$), then there is a strictly increasing reparametrization $h: I\to I_e$ such that 

$$c(s)=c_e(h(s)).$$

Moreover, 
$$\frac{dh}{ds}=2(e-V\circ\tau_{A^*}\circ c),\;\;\;\; h(0)=0.$$

\end{enumerate}
\end{theorem}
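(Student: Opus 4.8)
The plan is to prove (i)--(v) in order, with a single differential identity doing most of the work: on $\mathbb{S}_{g_e}(A^*)$ one has $dH_{(A,g,V)} = 2(e-V)\,d\kappa_{(A_e,g_e)}$. First I would prove (i). Writing $H_{(A,g,V)}=\kappa_{(A,g)}+V\circ\tau_{A^*}$ as in \eqref{H-g-V}, the vertical derivative of $\kappa_{(A,g)}$ at $\alpha_q$ is, in the coordinates of \eqref{3.4'}, $\partial\kappa_{(A,g)}/\partial y_\alpha = g^{\alpha\beta}y_\beta$, which is nonzero whenever $\alpha_q\neq 0$ because $g$ is positive definite; the added term $d(V\circ\tau_{A^*})$ is purely horizontal, so $dH_{(A,g,V)}(\alpha_q)\neq 0$ as soon as $\alpha_q\neq 0$. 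On $\mathbb{S}_{g_e}(A^*)$, by \eqref{S-estrella-e}, $\|\alpha_q\|_g^2=2(e-V(q))>0$ since $q\in U_e$, hence $\alpha_q\neq 0$; thus $e$ is a regular value and $\mathbb{S}_{g_e}(A^*)=H_{(A_e,g,V)}^{-1}(e)$ is a codimension-one submanifold with $T_{\alpha_q}\mathbb{S}_{g_e}(A^*)=\ker dH_{(A,g,V)}(\alpha_q)$. For the second description I would use \eqref{H-g-e} in the form $\kappa_{(A_e,g_e)}=\tfrac{1}{2(e-V)}(H_{(A,g,V)}-V\circ\tau_{A^*})$, differentiate, and evaluate where $H_{(A,g,V)}=e$; the potential terms cancel and one obtains $d\kappa_{(A_e,g_e)}=\tfrac{1}{2(e-V)}\,dH_{(A,g,V)}$ on $\mathbb{S}_{g_e}(A^*)$, so the two kernels coincide, proving \eqref{tangente}. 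Part (ii) is then read off directly: the fibre over $q$ is $\{\alpha_q:\|\alpha_q\|_g^2=2(e-V(q))\}$, a genuine $g$-sphere of radius $\sqrt{2(e-V(q))}$ centred at $0_q$ because the radius is strictly positive on $U_e$, and local triviality descends from that of $\tau_{A^*}\colon A^*\to Q$ together with smoothness of $V$.

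For (iii), tangency of $X_{H_{(A,g,V)}}$ follows from $X_{H_{(A,g,V)}}(H_{(A,g,V)})=\{H_{(A,g,V)},H_{(A,g,V)}\}_{A^*}=0$, and likewise $X_{\kappa_{(A_e,g_e)}}(\kappa_{(A_e,g_e)})=0$, so both fields are tangent to their common level set $\mathbb{S}_{g_e}(A^*)$. Since $X_F=-i(dF)\Pi_{A^*}$ depends $\R$-linearly on $dF$, and the identity from (i) reads $dH_{(A,g,V)}=2(e-V)\,d\kappa_{(A_e,g_e)}$ pointwise on $\mathbb{S}_{g_e}(A^*)$, with $2(e-V)$ a scalar at each point, contracting with $\Pi_{A^*}$ yields
\[
X_{H_{(A,g,V)}}{}_{|\mathbb{S}_{g_e}(A^*)}=2(e-V)\,X_{\kappa_{(A_e,g_e)}}{}_{|\mathbb{S}_{g_e}(A^*)}.
\]

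Part (iv) is where I expect the real work. The idea is to apply Theorem \ref{prop5.3} to the bundle $A_e\to U_e$: the restriction of $\Pi_{A^*}$ to the open set $A_e^*=\tau_{A^*}^{-1}(U_e)$ is again a linear Poisson structure, and $g_e$ is a bundle metric on $A_e$. That theorem then produces the Jacobi structure $\Lambda_{A_e^*}=\Pi_{A^*}+\Delta_{A^*}\wedge X_{\kappa_{(A_e,g_e)}}$, $E_{A_e^*}=-X_{\kappa_{(A_e,g_e)}}$ on $A_e^*$, and by its part (ii) this restricts to a Jacobi structure on $\mathbb{S}_{g_e}(A^*)=\kappa_{(A_e,g_e)}^{-1}(\tfrac12)$. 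Substituting the proportionality $X_{\kappa_{(A_e,g_e)}}{}_{|\mathbb{S}_{g_e}(A^*)}=\tfrac{1}{2(e-V)}X_{H_{(A,g,V)}}{}_{|\mathbb{S}_{g_e}(A^*)}$ from (iii) converts these into exactly the claimed expressions for $\Lambda_{\mathbb{S}_{g_e}(A^*)}$ and $E_{\mathbb{S}_{g_e}(A^*)}$. The delicate point, and the main obstacle, is that neither $\Pi_{A^*}$ nor $\Delta_{A^*}\wedge X_{H_{(A,g,V)}}$ is separately tangent to $\mathbb{S}_{g_e}(A^*)$; only their combination is, exactly as in the proof of Theorem \ref{prop5.3}(ii) where one verifies $i(d\kappa_{(A_e,g_e)})\Lambda_{A_e^*}=0$ on the level set. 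I would therefore keep the whole $2$-vector together when restricting and only display it in split form at the end, mirroring Remark \ref{contact-spheric}.

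Finally (v) is immediate from (iii). The two Hamiltonian vector fields are pointwise proportional on $\mathbb{S}_{g_e}(A^*)$ by the strictly positive factor $2(e-V\circ\tau_{A^*})$, so their integral curves through a common point coincide up to reparametrization. Setting $c(s)=c_e(h(s))$ and differentiating gives $c'(s)=h'(s)\,X_{\kappa_{(A_e,g_e)}}(c(s))$, which matches $X_{H_{(A,g,V)}}(c(s))=2(e-V)\,X_{\kappa_{(A_e,g_e)}}(c(s))$ precisely when $h'(s)=2(e-V\circ\tau_{A^*}\circ c(s))$; together with $h(0)=0$ this ODE has a unique, strictly increasing solution because its right-hand side is positive on $U_e$, and uniqueness of integral curves forces $c(s)=c_e(h(s))$. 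Thus the analytic content of the whole theorem collapses to the one identity from (i), and the only genuine subtlety is the bookkeeping in (iv).
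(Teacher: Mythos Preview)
Your proposal is correct and follows essentially the same strategy as the paper: establish the differential identity $dH_{(A,g,V)}=2(e-V)\,d\kappa_{(A_e,g_e)}$ on the level set, derive (iii) from it, and then invoke Theorem~\ref{prop5.3} applied to $(A_e,g_e)$ for (iv), with (v) as a reparametrization consequence of (iii). The only noteworthy difference is in (i)--(ii): the paper shows nonvanishing of both differentials by pairing with the Liouville vector field, obtaining $\langle dH_{(A_e,g,V)}(\alpha_q),\Delta_{A^*}(\alpha_q)\rangle=2(e-V(q))>0$ and $\langle d\kappa_{(A_e,g_e)}(\alpha_q),\Delta_{A^*}(\alpha_q)\rangle=1$, which simultaneously provides the transversal direction $\Delta_{A^*}$ needed to see that $\tau_{A^*}$ restricts to a fibration; your vertical/horizontal coordinate argument achieves the same regularity conclusion but does not single out this transversal, so your justification of the bundle structure in (ii) is slightly more ad hoc.
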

%%%%%%%%%%%%%%%%%%%%%%%%%%%%%%%%%%%%%%%%
\begin{proof} Since 
\[
\langle dH_{(A_e,g, V)}(\alpha_q), \Delta_{A^*}(\alpha_q) \rangle  =\|\alpha_q\|_g^2= 2(e - V(q)) >  0,
\]
and
\[
\langle d\kappa_{(A_e,g_e)}(\alpha_q), \Delta_{A^*}(\alpha_q) \rangle  = \|\alpha_q\|^2_{g_e} = 1 >  0,
\]
then
\[
dH_{(A,g, V)}(\alpha_q)\neq 0\mbox{ and }d\kappa_{(A_e,g_e)}(\alpha_q) \neq 0.
\]
Thus, ${\mathbb S}_{g_e}(A^*)$ is a submanifold of $A^*_e$ of codimension $1$ and (\ref{tangente}) holds. Moreover, 
\[
T_{\alpha_q} A^*_e = T_{\alpha_q} {\mathbb S}_{g_e}(A^*)  \oplus \langle \Delta_{A^*}(\alpha_q)\rangle .
\]
Therefore, using that $\Delta_{A^*}$ is vertical with respect to the projection $\tau^*_{|A^*_e}: A^*_e \to U_e$, it follows that the restriction of $\tau^*_{|A^*_e}$ to ${\mathbb S}_{g_e}(A^*) $ is a fibration
\[
\tau^*_{|{\mathbb S}_{g_e}(A^*) }: {\mathbb S}_{g_e}(A^*)  \to U_e.
\]
In addition, it is easy to prove that the fiber of $\tau^*_{|{\mathbb S}_{g_e}(A^*) }$ by $q \in U_e$ is just the sphere of center $0_q \in A^*_q$ and radius $\sqrt{2(e-V(q))}$, with respect to the bundle metric $g$. This proves {\it i)} and {\it ii)}. 

Now, using \eqref{tangente}, it is clear that the restriction to ${\mathbb S}_{g_e}(A_e^*)$ of the Hamiltonian vector fields $X_{H_{(A_e,g,V)}}$ and 
$X_{\kappa_{(A_e,g_e)}}$ is tangent to ${\mathbb S}_{g_e}(A_e^*)$. In addition, if $\alpha _q\in {\mathbb S}_{g_e}(A_e^*)$, we have that
\[
\kappa _g (\alpha _q)=e-V(q),
\]
and, from (\ref{H-g-e}), we deduce that
\[
d\kappa _{(A_e,g_e)}(\alpha _q)=\frac{1}{2(e-V(q))}dH_{(A_e,g,V)}(\alpha _q).
\]
This implies that 
\[
(X_{H_{(A_e,g,V)}})_{| \mathbb{S}_{g_e}(A^*_e)}=2(e-V_{| U_e})(X_{\kappa_{(A_e,g_e)}})_{| \mathbb{S}_{g_e}(A^*_e)},
\]
which proves {\it (iii)}.

{\it (iv)} follows using {\it (iii)} and Theorem \ref{prop5.3}.

Finally, {\it (v)} is a consequence of {\it (iii)}.
\end{proof}
\begin{remark}\label{results-Section-2}
If we apply Theorem \ref{estructura-S-estrella-e} to the particular case when $A=TQ$ and the linear Poisson structure on $T^*Q$ is the canonical symplectic structure then, using Remark \ref{contact-spheric}, we deduce all the results in Section \ref{subsection2.1}.
\end{remark}
If $(q^i, y_\alpha )$ are local coordinates on $A^*$ then, using \eqref{Local-expression-Poisson-2-vector}, \eqref{3.4'} and \eqref{H-g-V}, we deduce that
\begin{equation}\label{3.15'}
\begin{array}{rcl}
X_{\kappa_{(A_e,g_e)}}=\displaystyle \frac{1}{2(e-V)} X_{(H,g,V)}&=&\displaystyle 
\frac{1}{2(e-V)}\Big (  ( \rho ^i_\alpha g^{\alpha\beta} y_\beta ) \frac{\partial}{\partial q^i}- (\frac{1}{2} 
\frac{\partial g^{\alpha\gamma}}{\partial q^i} \rho ^i_\beta y_\alpha y_\gamma \\[8pt]  &&\displaystyle -C^\gamma_{\alpha \beta}g^{\alpha \gamma}y_\gamma y_\mu +\frac{\partial V}{\partial q^i}\rho ^i_\beta  )\frac{\partial }{\partial y_\beta}\Big ).
\end{array}
\end{equation}
\begin{remark}
Note that if in Theorems \ref{prop5.3} and \ref{estructura-S-estrella-e} we replace the bundle metric $g$ by a non-degenerate symmetric section of the vector bundle $A^*\otimes A^*\to Q$ then both results remain valid. This implies that the results in this section can also be applied to the dynamics of relativistic particles. In such a case, we have a Lorentz bundle metric.
\end{remark}

\section{Examples}
\label{section6} \setcounter{equation}{0}
\medskip

In this section we will apply the previous constructions to some Hamiltonian mechanical systems with respect to linear Poisson structures on vector bundles.

\subsection{Mechanical Hamiltonian systems on the dual bundle of an Atiyah Lie algebroid 
associated with a trivial principal bundle}
Let $G$ be a Lie group with Lie algebra $\g$ and consider the product manifold $M=G\times Q$, with $Q$ a smooth manifold. Then, $M$ is the total space of a trivial principal $G$-bundle 
\[
p\colon M=G\times Q\rightarrow Q ,
\]
where the action of $G$ on $M$ is just by left-translation on the first factor. 

The cotangent lift of the $G$-action on $M$ induces a 
principal action of $G$ on $T^*M=T^*G\times T^*Q$. 
Indeed, using the left-trivialization of $T^*G$, we can 
identify  $T^*M$ with the product manifold $(G\times \g^*)
\times  T^*Q$ and, under this identification, we have that 
the quotient space $T^*M/G$  is just the trivial vector 
bundle (over $Q$), $T^*Q\times \g^*\to Q$ (see, for 
instance, \cite{Abraham-Marsden}). 

In the total space of the vector bundle  $T^*Q\times \g^*$ there is a fiberwise linear 
Poisson structure, denoted by $
\Pi _{T^*Q\times \g^*}$, which is the product of the canonical 
symplectic structure on $T^*Q$ and the Lie-Poisson 
structure on $\g^*$,  whose bracket is characterized as follows 
\[
\{f_1,f_2\}_{{\mathfrak g}^*}(\alpha)=-\alpha([df_1(\alpha),df_2(\alpha)]_{\mathfrak g}),\;\;\;\; \mbox{ for all } \alpha\in {\mathfrak g}^* \mbox{ and } f_1,f_2\in C^\infty({\mathfrak g}^*).
\]
Here, $[\cdot ,\cdot ]_{\mathfrak g}$ is the Lie bracket on $\mathfrak{g}$. If $(q^i, p_i)$ are local fibred coordinates on $T^*Q$ and $\{ e_a\}$ is a basis of $\g$, we have the corresponding local coordinates $(q^i,y_\alpha)=(q^i, p_i,y_a)$ on $T^*Q\times \g^*$ and, for every $\varphi ,\psi\in C^\infty (T^*Q\times \g^*)$
\[
\{\varphi, \psi \}_{T^*Q\times \g^*} = \Big ( \frac{\partial \varphi }{\partial q^i} \frac{\partial \psi }{\partial p_i} -\frac{\partial \varphi }{\partial p_i} \frac{\partial \psi }{\partial q^i} \Big ) -\frac{1}{2}  c_{ab}^c y_c \Big ( \frac{\partial \varphi }{\partial y_a} \frac{\partial \psi }{\partial y_b} -\frac{\partial \varphi }{\partial y_b} \frac{\partial \psi }{\partial y_a} \Big ) ,
\]
where $c^{c}_{ab}$ are the structure constants of the Lie algebra $\mathfrak{g}$ with respect to the basis $\{e_a\}$.
\begin{remark}
The canonical symplectic structure on $T^*M$ is $G$-invariant. Thus, it induces a Poisson structure on the quotient space $T^*M/G\cong T^*Q\times \g^*$. This Poisson structure is just $\Pi_{T^*Q\times g^*}$ (see, for instance, \cite{MMOPS}). 
\end{remark}
Now, let $g$ be a bundle metric on the vector bundle $T^*Q\times \g^*\to Q$ and $V\in C^\infty (Q)$ a real $C^\infty$-function on $Q$. Then, we can consider the mechanical Hamiltonian function $H_{(g,V)}\colon T^*Q\times \g^*\to \R$ given by
\begin{equation}\label{H-g-V-Atiyah}
H_{(g,V)}(\alpha _q,\gamma) = \frac{1}{2} \| (\alpha _q,\gamma )\| ^2_g +V(q), \mbox{ for }\alpha_q\in T_q^*Q \mbox{ and } \gamma\in \g^*.
\end{equation}
\begin{remark}
Hamiltonian functions as in \eqref{H-g-V-Atiyah} may be obtained by reduction, from $G$-invariant Riemannian metrics and $G$-invariant potential functions on $M=G\times Q$.
\end{remark}
If 
\[
\left (
\begin{array}{cc}
g^{ij}(q) & g^{ia} (q) \\
g^{ia}(q) & g^{ab} (q)
\end{array}
\right )
\]
is the matrix of coefficients of the scalar product $g(q)$ on $T^*_qQ\times \g^*$ then it follows that 
\[
H_{(g,V)} (q^i,p_i,y_a)= \frac{1}{2} \displaystyle ( g^{ij}(q) p_ip_j+g^{ab}(q)y_ay_b ) +g^{ia}(q)p_iy_a +V(q).
\]
Next, let $e$ be a real number such that $U_e=\{q\in Q\, :\, V(q) < e\}\not=\emptyset$. Then 
\[
H_{(g,V)}^{-1}(e)=\{\alpha_q\in T^*U_e\, :\, \|\alpha_q\|^2_g=2(e-V(q))\}=\kappa_{g_e}^{-1}(\frac{1}{2})=\mathbb{S}_{g_e}(T^*U_e\times \g^*),
\]
where $\kappa_{g_e}\colon T^*U_e\times \g^*\to \R$ is the kinetic energy associated with the Jacobi bundle metric on $T^*U_e\times \g^*$ given by
\[
g_e=2({e-V\circ\tau^*_Q})g.
\]
Using  \eqref{3.15'}, it follows that the local expression of the Hamiltonian vector field on $T^*U_e\times \g^*$ of the kinetic energy $\kappa _{g_e}$ is 
\begin{eqnarray*}
X_{\kappa_{g_e}} (q,p,y)&=&\frac{1}{2(e-V)} \left ( \big (g^{i\alpha}(q)y_\alpha \big )\displaystyle \frac{\partial }{\partial q^i}- \Big (\frac{1}{2}\frac{\partial g^{\alpha\gamma}}{\partial q^i}y_\alpha y_\gamma +\frac{\partial V}{\partial q^i}\Big ) \displaystyle \frac{\partial }{\partial p_i} \right .\\&& \left . +(c^{c}_{ab}g^{ia}(q)p_iy_c + c_{ab}^cg^{ad}(q)y_cy_d)\displaystyle \frac{\partial }{\partial y_b}\right ) .
\end{eqnarray*}
In addition, the Jacobi structure on $\S _{g_e}(T^*U_e\times \g^*)$ is the restriction to $\S _{g_e}(T^*U_e\times \g^*)$ of the Jacobi structure $(\Lambda _{T^*U_e\times \g^*},E_{T^*U_e\times \g^*})$ on $T^*U_e\times \g^*$ given by
\[
\begin{array}{rcl}
\Lambda _{T^*U_e\times \g^*}&=&\displaystyle  \frac{\partial  }{\partial q^i}\wedge \frac{\partial }{\partial p_i} - \frac{1}{2}  c_{ab}^c y_c  \frac{\partial }{\partial y_a} \wedge \frac{\partial }{\partial y_b} + y_\alpha \frac{\partial  }{\partial y_\alpha } \wedge X_{\kappa _{g_e}},
\\
E_{T^*U_e\times \g^*}&=&-X_{\kappa _{g_e}}.
\end{array}
\]
\begin{remark}
If $Q$ is a single point, then the Hamiltonian function is the kinetic energy
\[
\kappa_{\langle \cdot, \cdot \rangle}:{\mathfrak g}^*\to \R,\;\;\; \kappa _{\langle \cdot, \cdot \rangle}(\alpha)=\frac{1}{2}\langle \alpha,\alpha\rangle^*,
\]
associated with a scalar product ${\langle \cdot, \cdot \rangle}$ on $\mathfrak{g}$. Here,
$\langle \cdot,\cdot\rangle ^*: {\mathfrak g}^*\times {\mathfrak g}^*\to {\mathbb R}$ denotes the corresponding scalar product on ${\mathfrak g}^*$. In this setting, the Jacobi structure $(\Lambda_{{\mathfrak g}^*}{}_{|{\mathbb{S}_{\langle \cdot, \cdot \rangle} (\mathfrak{g}^*)}},E_{{\mathfrak g}^*}{}_{|\mathbb{S}_{\langle \cdot, \cdot \rangle} (\mathfrak{g}^*)})$ on 
\[
\mathbb{S}_{\langle \cdot, \cdot \rangle} (\mathfrak{g}^*)=\kappa_{\langle \cdot, \cdot \rangle}^{-1}\big ( \frac{1}{2}\big )=\{\alpha\in {\mathfrak g}^* \, : \,\langle \alpha,\alpha\rangle ^*=1\},
\]
is $p$-projectable on the conformal Jacobi structure introduced by Lichnerowicz in \cite{Li1} in the quotient space $(\mathfrak{g}^*-\{ 0\})/ (\Delta _{\mathfrak{g}^*})_{|\mathfrak{g}^*-\{0\}}$, 
$p\colon \mathbb{S}_{\langle \cdot, \cdot \rangle} (\mathfrak{g}^*)\to (\mathfrak{g}^*-\{ 0\})/ (\Delta _{\mathfrak{g}^*})_{|\mathfrak{g}^*-\{0\}}$ being the canonical covering map.

An interesting example of this situation is given by the rigid body, in which $\mathfrak{g}$
is the Lie algebra ${\mathfrak{so}}(3)$ of the orthogonal group $SO(3)$.
\end{remark}
\begin{example}[{\bf The coupled planar pendula}]
{\rm As a simple example of a mechanical Hamiltonian system on the dual bundle of an Atiyah Lie algebroid, we will consider the coupled planar pendular \cite{MMT}. Let us consider two pendulums which are coupled moving under the influence of a potential depending on the hinge angle between them. Denote by $\theta _1$ and $\theta _2$ the angles formed by the straight lines through the joint and their centers of mass relative to an inertial coordinate system fixed in space. Thus, the configuration space is $M=\mathbb{T}^2=S^1\times S^1$.  The metric in this case is just
\[
g= \frac{1}{2} (d\theta _1 \otimes d\theta _1+d\theta _2 \otimes d\theta _2).
\]
and the potential energy is of the form $V(\theta _1-\theta _2)$. There is an action of $S^1$ on $M$
\[
\phi:S^1\times M\to M
\]
which, considering coordinates $(\varphi , \psi)$ given by
\[
\varphi  = \frac{ \theta _1+\theta _2}{\sqrt{2}}, \qquad \psi =  \frac{ \theta _1-\theta _2}{\sqrt{2}},
\]
can be written as $\phi(\theta,  (\varphi ,\psi ))=(\varphi +\theta ,\psi )$,
and we have the trivial principal $S^1$-bundle $p\colon S^1\times S^1\to S^1$, $(\varphi , \psi )\mapsto \psi$.
In addition, the quotient for the lifted $S^1$-action on $T^*M$  is $T^*S^1 \times \R$. We will consider coordinates $(\psi ,p_\psi ,y)$ where $y=(p_\varphi d\varphi )\cdot \varphi^{-1}$. Thus, the Hamiltonian is
\[
H_{(g,V)} (\psi , p_\psi ,y)=\frac{1}{2} ( p^2_\psi + y^2 ) +V(2\sqrt{\psi}).
\]
Let $e\in \R$ and the open set $U_e= \{\psi\in S^1\, \colon \, V(\sqrt{2}\psi)<e\}. $ Then 
\[
\mathbb{S}_{g_e}(T^*U_e\times \R)=\{(\psi , p_\psi, y)\in T^*U_e \, \colon \,  y^2 + p^2_\psi=2(e-V(\sqrt{2}\psi))\}
\]
and 
where $g_e$ is the metric on $U_e$ given by 
\[
g_e=2(e-V(\sqrt{2}\psi))g.
\]

The local expression of $X_{\kappa _{g_e}}$ is
\[
\begin{array}{rcl}
X_{\kappa _{g_e}}&=&
\displaystyle \frac{1}{2(e-V(\sqrt{2}\psi))} \Big ( \frac{1}{2} p_\psi \frac{\partial}{\partial \psi} + \frac{\partial V}{\partial \psi}
\frac{\partial}{\partial p_\psi}\Big )
\end{array}
\]
and the Jacobi structure on $T^*U_e\times \R$ is given by
\[
\begin{array}{rcl}
\Lambda_{T^*U_e\times \R}&=&\displaystyle \frac{\partial}{\partial \psi}\wedge \frac{\partial}{\partial p_\psi} +y \frac{\partial}{\partial y} \wedge
X_{\kappa _{g_e}}\\[8pt]
E_{T^*U_e\times \R}&=&
\displaystyle - \frac{1}{2(e-V(\sqrt{2}\psi))} \Big ( \frac{1}{2} p_\psi \frac{\partial}{\partial \psi} + \frac{\partial V}{\partial \psi}
\frac{\partial}{\partial p_\psi}\Big ).
\end{array}
\]
}
\end{example}

\subsection{Mechanical Hamiltonian systems with respect to linear semi-direct Poisson structures}

Let $G$ be a Lie group and  $\phi:G\times P\to P$ an action of $G$ on a manifold $P.$  If ${\mathfrak g}$ is the Lie algebra of $G$, then we consider the trivial vector bundle  $P\times {\mathfrak g\to P}$  whose space of sections may be identified with the set $C^\infty(P,{\mathfrak g})$ of ${\mathfrak g}$-valued functions on $P.$   The dual bundle, $P\times {\mathfrak g}^*$, admits a linear Poisson structure $\Pi_{P\times {\mathfrak g}^*}$, called the {\it semi-direct Poisson structure} \cite{KM,W3},  whose bracket of functions is given by
\begin{eqnarray*}
\{ F _1, F_2\}_{P\times {\mathfrak g}^*} (p,\alpha)&=&
- \alpha ( [(D_2 F_1) (p,\alpha ) , (D_2 F_2) (p,\alpha )   ]_\mathfrak{g} ) \\ && + ( D_2 F_1(p,\alpha) )_P(p) (F_2) -  ( D_2 F_2 (p,\alpha) )_P (p)(F_1),
\end{eqnarray*}
for $F_1,F_2\in C^\infty (P\times \mathfrak{g}^*)$ and  $(p,\alpha)\in P\times \mathfrak{g}^*$, where
$(D_2 F )(p,\alpha )=(d F (p,\cdot ) )(\alpha)\in T_{\alpha}^* \mathfrak{g}^*\cong \mathfrak{g}$ and, for every $\xi\in \mathfrak{g}$, $\xi _P\in\mathfrak{X}(P)$ is the infinitesimal generator of the action $\phi$ associated with $\xi$.
Additionally, suppose that there is a scalar product $
\langle \cdot,\cdot\rangle$ on ${\mathfrak g}$ and we 
denote by $\langle \cdot,\cdot\rangle^*$ the corresponding 
scalar product on ${\mathfrak g}^*$.  Let $H_{(\langle 
\cdot,\cdot\rangle, V)}\colon P\times {\mathfrak g}^*\to \R$ 
be the Hamiltonian function given by
\[
H_{(\langle  \cdot,\cdot\rangle, V)}(p,\alpha)=\frac{1}{2}\langle  \alpha,\alpha \rangle^* + V(p),\quad \mbox{ for } (p,\alpha )\in P\times \mathfrak{g}^*.
\]
Now, let $e\in \R$ a real number such that $U_e=\{p\in P\, :\,V(p)<e\}\not=\emptyset.$ Then 
\[
{\mathbb S}_{g_e}(P\times {\mathfrak g}^*)=H_{(\langle  \cdot,\cdot\rangle, V)}^{-1}(e)=(\kappa_{g_e})^{-1}\left (\frac{1}{2}\right )=\{(p,\alpha)\in P\times {\mathfrak g}^*\, :\, \langle \alpha,\alpha \rangle ^*=2(e-V(p))\},\]
where $\kappa_{g_e}$ is the kinetic energy induced by the Jacobi metric on $P\times {\mathfrak g}^*$
\[
g_e={2(e-V)}\langle \cdot,\cdot\rangle ^*.
\]
Moreover, the trajectories of the Hamilton equations for 
the Hamiltonian system $(P\times {\mathfrak g}^*, 
$ $H_{(\langle  \cdot,\cdot\rangle, V)})$ on ${\mathbb S}_{g_e}(
P\times {\mathfrak g}^*)$  are, up to reparametrization, 
 the trajectories of the Hamilton equations of the system $
 (P\times {\mathfrak g}^*, \kappa_{g_e})$  on ${\mathbb S}_{g_e}(P\times {\mathfrak g}^*).$ 

Now, let $\{ e_\alpha\}$ be a basis of $\mathfrak{g}$ such that 
\[
[e_\alpha ,e_\beta]_\g=c_{\alpha \beta}^\gamma e_\gamma ,\quad (e_\alpha )_P=-\rho ^i_\alpha \frac{\partial }{\partial q^i}.
\]
Then, the Jacobi structure on ${\mathbb S}_{g_e}(P\times {\mathfrak g}^*)$ is the restriction to ${\mathbb S}_{g_e}(P\times {\mathfrak g}^*)$  of the Jacobi structure $(\Lambda _{P\times {\mathfrak g}^*},E_{P\times {\mathfrak g}^*})$ on $P\times \g^*$ given by 
\[
\begin{array}{rcl}
\Lambda_{P\times\g^*}&=&\displaystyle{ (\rho ^i_\alpha -\frac{1}{2(e-V)}\rho^i_\gamma \langle\cdot ,\cdot\rangle ^{\gamma\beta} y_\alpha y_\beta )\displaystyle \frac{\partial
}{\partial
q^i}\wedge \displaystyle\frac{\partial }{\partial y_\alpha }}\\[10pt]
&&\displaystyle{ -\Big ( \frac{1}{2} c^\gamma_{\alpha\beta }y_\gamma + \frac{1}{2(e-V)} ( \rho ^i_\beta y_\alpha \frac{\partial V}{\partial q^i} - c_{\nu\beta}^\gamma \langle\cdot ,\cdot \rangle ^{\mu\nu} y_\alpha y_\gamma y_\mu ) \Big )\displaystyle \frac{\partial
}{\partial
y_\alpha }\wedge \displaystyle\frac{\partial }{\partial y_\beta },}\\[15pt]
E_{P\times\g^*}&=& \displaystyle{- \frac{1}{2(e-V)} \Big ( ( \rho^i_\alpha \langle\cdot ,\cdot\rangle ^{\alpha\beta}  y_\beta )\displaystyle \frac{\partial }{\partial q^i}-(\rho ^i_\beta  \frac{\partial V}{\partial q^i} - c_{\alpha\beta}^\gamma \langle\cdot ,\cdot \rangle ^{\alpha \mu} y_\gamma y_\mu )  \displaystyle\frac{\partial }{\partial y_\beta }\Big )}
\end{array}
\]
where $\langle\cdot,\cdot \rangle^{\alpha \beta}=\langle e^\alpha ,e^\beta \rangle ^*$.

So, the restriction to ${\mathbb S}_{g_e}(P\times {\mathfrak g}^*)$ of $X_{\kappa_{g_e}}$ is just 
$-(E_{P\times\g^*})_{|{\mathbb S}_{g_e}(P\times {\mathfrak g}^*)}$.
\begin{example}[{\bf The heavy-top}] {\rm Let $G$ be  the special orthogonal group $SO(3)$, $P$ be the sphere $S^2$  and the action $\phi: SO(3)\times S^2\to S^2$ given by $(A,q)\mapsto Aq$.  We take the basis $\{ e_1,e_2,e_3\}$ of $\so(3)$ such that
\[
[e_1,e_2]_{\so(3)}=e_3,\;\;\; [e_2,e_3]_{\so(3)}=e_1, \;\;\; [e_3,e_1]_{\so(3)}=e_2.
\]
Using this basis, we can identify $\so(3)$ with $\R^3$ and the infinitesimal vector fields 
of the action are described by $\xi _{S^2}(q)= \xi \times q$, for $\xi \in \R^3$ and $q\in 
S^2$, where $\times$  is the vector product in $\R^3$.

We consider the scalar product on $\mathfrak{so}(3)$ given by
\[
\langle x  , y \rangle =x \cdot I y,
\]
$I$ being the inertia tensor of the top.  For simplicity in the expressions, we will assume   
that $I$ is diagonal, that is,
\[
\langle (x^1,x^2,x^3),(y^1,y^2,y^3)\rangle =\sum_{i=1}^3 I_ix^i \, y^i,
\]
for $(x^1,x^2,x^3),(y^1,y^2,y^3)\in \mathbb{R}^3\cong 
\mathfrak{so}(3)$, where the positive numbers $I_1$, $I_2$, $I_3$ are the principal moments of inertia. The potential energy $V:S^2\to \R$  is 
defined by $V(q) = mgl(q\cdot a),$ for $q\in S^2,$ where 
$a$ is the unit vector from the fixed point to the center of 
mass and $m,$
$g$ and $l$ are constants. Thus, if $e\in \R$ and $U_e=\{ q\in S^2\, :\, q\cdot a <\displaystyle\frac{e}{mgl}\}$ 
\[
{\mathbb S}_{g_e}(S^2\times \mathfrak{so}(3)^*)=\{(q,\alpha)\in U_e \times \mathfrak{so}(3)^*\, : \, \sqrt{\alpha \cdot I^{-1}\alpha}=2(e-mgl(q\cdot a))\}.
\]
The trajectories of the Hamilton equations of $(S^2\times \mathfrak{so}(3)^*, H_{(\langle  \cdot,\cdot\rangle, V)})$ on ${\mathbb S}_{g_e}(S^2\times \mathfrak{so}(3)^*)$  are just
the trajectories of the Hamilton equations of the system $(S^2\times \mathfrak{so}(3)^*, \kappa_{g_e})$  on ${\mathbb S}_{g_e}(S^2\times \mathfrak{so}(3)^*)$, where $g_e$ is the metric on $S^2\times \mathfrak{so}(3)^*$
\[
g_e(x)(\alpha,\alpha')=2(e-mglx\cdot a)\alpha \cdot I^{-1}\alpha'.
\]
If $(y_1,y_2,y_3)$ are the coordinates on $\so(3)^*\cong \R^3$ with respect to the base $\{ e_1,e_2,e_3\}$ and $(q_1,q_2,q_3)$ are canonical coordinates on $\R^3$
then the linear Poisson structure $\Pi _{S^2\times \mathfrak{so}(3)^*}$ has the expression
\begin{eqnarray*}
\Pi _{S^2\times \mathfrak{so}(3)^*}&=&
 -q^3   \displaystyle\frac{\partial}{\partial q^1}\wedge \displaystyle\frac{\partial}{\partial y_2}
+ q^2
 \displaystyle\frac{\partial}{\partial q^1}\wedge \displaystyle\frac{\partial}{\partial y_3}
 + q^3
 \displaystyle\frac{\partial}{\partial q^2}\wedge \displaystyle\frac{\partial}{\partial y_1}
\\&& -q^1
\displaystyle\frac{\partial}{\partial q^2}\wedge \displaystyle\frac{\partial}{\partial y_3}
 -q^2
\displaystyle\frac{\partial}{\partial q^3}\wedge \displaystyle\frac{\partial}{\partial y_1}
+
 q^1
\displaystyle\frac{\partial}{\partial q^3}\wedge \displaystyle\frac{\partial}{\partial y_2}
\\&&  \displaystyle -\frac{1}{2} \Big ( y_3 
\frac{\partial}{\partial y_1}\wedge \displaystyle\frac{\partial}{\partial y_2} +y_2  
\frac{\partial}{\partial y_3}\wedge \displaystyle\frac{\partial}{\partial y_1} + y_1  
\frac{\partial}{\partial y_2}\wedge \displaystyle\frac{\partial}{\partial y_3}\Big ),
\end{eqnarray*}
the Hamiltonian vector field  $X_{\kappa_{g_e}}$ is given by
 \begin{eqnarray*}
X_{\kappa_{g_e}}&=&
- \frac{1}{2(e-V)}\displaystyle\Big [  \Big ( \frac{q^2 y_3}{I_3}-\frac{q^3 y_2}{I_2}  \Big)  \displaystyle\frac{\partial}{\partial y_1}
+
\Big ( \frac{q^3 y_1}{I_1} -\frac{q^1 y_3}{I_3} \Big )
 \displaystyle\frac{\partial}{\partial q^2}
+ 
\Big ( \frac{q^1 y_2}{I_2}  -\frac{q^2 y_1}{I_1}  \Big )
 \displaystyle\frac{\partial}{\partial q^3}
\\&&+ \Big ( q^3\frac{\partial  V}{\partial q^2}-q^2\frac{\partial  V}{\partial q^3}+\frac{y_2y_3}{I_2}-\frac{y_2y_3}{I_3} \Big )
 \displaystyle\frac{\partial}{\partial y_1}
 +\Big (q^1\frac{\partial  V}{\partial q^3} -q^3\frac{\partial  V}{\partial q^1}-\frac{y_1y_3}{I_1}+\frac{y_1y_3}{I_3} \Big )
 \displaystyle\frac{\partial}{\partial y_2}
 \\ && + \Big ( q^2\frac{\partial  V}{\partial q^1}-q^1\frac{\partial  V}{\partial q^2}+\frac{y_1y_2}{I_1}-\frac{y_1y_2}{I_2}\Big )
 \displaystyle\frac{\partial}{\partial y_3}\Big ]
\end{eqnarray*}
and the Jacobi structure is given by
\begin{eqnarray*}
\Lambda _{S^2\times \mathfrak{so}(3)^*}&=&\Pi_{S^2\times \mathfrak{so}(3)^*}+\Delta _{S^2\times \so(3)^*} \wedge  X_{\kappa_{g_e}},
\\
E_{S^2\times \mathfrak{so}(3)^*}&=&-X_{\kappa_{g_e}},
\end{eqnarray*}
where $\Delta _{S^2\times \so(3)^*} =y_\alpha \frac{\partial }{\partial y_\alpha}$ is the Liouville vector field.
}
\end{example}

\section{Conclusions and future work}
We have described the relation between the dynamics of mechanical Hamiltonian systems with respect to a linear Poisson structure on a vector bundle and the Jacobi-Reeb dynamics on the corresponding spherical bundle. In the particular case when the vector bundle is the cotangent bundle $T^*Q$ of a manifold $Q$ and the linear Poisson structure on $T^*Q$ is just the canonical symplectic structure, we recover the classical case.

\medskip
We remark that the classical result establishes an interesting connection between mechanical Hamiltonian flows on $T^*Q$ and geodesic flows associated with Riemannian metrics on $Q$ (or on open subsets $U_e$ of $Q$). Indeed, for a fixed energy $e$, the Reeb vector field of the contact structure on the spheric cotangent bundle $\S_{g_e}(T^*U_e)$, associated with the Jacobi metric $g_e$, is just the restriction to $\S_{g_e}(T^*U_e)$ of the geodesic flow on $T^*U_e$ for the Jacobi metric $g_e$. This fact has important dynamical consequences. For instance, when $Q$ is compact and the Riemannian curvature of $Q$ is negative (see Introduction). 

\medskip
In the more general case when we replace the cotangent bundle $T^*Q$ by an arbitrary vector bundle $\tau_{A^*} \colon A^*\to Q$ endowed with a linear Poisson structure, some aspects stay similar. More precisely, we have that $A$ is a Lie algebroid (see Remark \ref{rem:Liealg}) and if $g$ is a bundle metric on $A$, then one may introduce the Levi-Civita connection as an $A$-connection on $A$ (for the theory of such connections see, for instance, \cite{Ma} and the references therein) and the geodesics of $g$ as a particular class of admissible curves on $A$ (see \cite[Example 3.6]{CoLeMaMa}) or, equivalently, on $A^*$ (note that the metric $g$ induces a vector bundle isomorphism between $A$ and $A^*$). In fact, the geodesics are the integral curves of the Hamiltonian vector field $X_{\kappa _g}$ of the kinetic energy $\kappa _g\colon A^*\to \R$. $X_{\kappa _g}$ is the geodesic flow on $A^*$ of $g$. In addition, using the Levi-Civita connection, one can also introduce the curvature of the bundle metric $g$ and the sectional curvature of a 2-dimensional subspace of $A_q$, with $q\in Q$.

\medskip
The previous comments and Theorem \ref{estructura-S-estrella-e} imply, as in the classical case, that there also is a connection between mechanical Hamiltonian dynamics, with respect to linear Poisson structures on vector bundles, and geodesic flows on such spaces.

\medskip
So, it would be interesting to develop a research program on the geometry (geodesics and curvature) of bundle metrics on vector bundles endowed with linear Poisson structures (or, equivalently, bundle metrics on Lie algebroids). The results of this research could be applied in the description of mechanical Hamiltonian dynamics with respect to such Poisson structures. In fact, following the ideas in \cite{Anosov} for the classical case, some possible topics such as:
\begin{itemize}
\item Structural stability of the trajectories,
\item Periodic orbits,
\item Ergodicity of the system,
\item Killing tensors,
\end{itemize}
could be discussed. The case when the base space of the vector bundle is compact and the sectional curvature is negative should be remarkable. 

\medskip
Another interesting goal is to develop hybrid variational integrators for mechanical Hamiltonian systems with respect to linear Poisson structures 
based on the results in this paper and following the ideas in \cite{NBM}.

\medskip
Finally, we remark that there exists a relevant connection between the Jacobi-Maupertuis principle and the Schr\"odinger variational principle of wave mechanics (for a discussion on this topic, see \cite{GrKaNo}). So, it would be interesting to extend these ideas to the more general setting of Lie algebroids or, equivalently, vector bundles endowed with linear Poisson structures.

\medskip
{\bf Acknowledgments:} The authors acknowledge
financial support from the Spanish Ministry of Science and Innovation and European Union (Feder)  grant PGC2018-098265-B-C32. We are also grateful to the referees for their suggestions that helped us to improve this paper.

{\bf Data Availability} The data supporting the conclusions of this paper are included within the article

{\bf Conflicts of interest}  There are no conflict of interest in this article.

\end{document}